\documentclass[11pt,reqno]{amsart}
\usepackage{amsmath, amsthm, amsfonts, amssymb,color,url}

\usepackage{tikz,amssymb,float}
\usetikzlibrary{arrows.meta,calc}
\usepackage{graphicx,paralist,epstopdf}
\makeatletter
\newcommand*\bigcdot{\mathpalette\bigcdot@{.5}}
\newcommand*\bigcdot@[2]{\mathbin{\vcenter{\hbox{\scalebox{#2}{$\m@th#1\bullet$}}}}}
\makeatother
\usepackage{wrapfig,hyperref}
\usepackage{mathtools}
\usepackage{fullpage}
\usepackage{subfig}
\usepackage{color}
\usepackage[normalem]{ulem}
\usepackage{tikz}
\usepackage{hyperref}
\hypersetup{
	colorlinks=true,
	linkcolor=blue,
	filecolor=magenta,
	urlcolor=cyan,
}
\theoremstyle{plain}
\newtheorem{theorem}{Theorem}[section]
\newtheorem{lemma}[theorem]{Lemma}
\newtheorem{cor}[theorem]{Corollary}
\theoremstyle{definition}
\newtheorem{definition}{Definition}
\newtheorem{problem}{Problem}

\newtheorem{remark}{Remark}

\newcommand{\M}{\operatorname{M}}

\title{One and Seven-Eighths Divisibility Problems of Propp}
\author[A.~Dunkelberg]{Aidan ~Dunkelberg}
\address{
Department of Mathematics \\
Indiana University \\
Bloomington, Indiana 47405} 
\email{aidunk@iu.edu}
\date{}

\begin{document}
	
\begin{abstract}
	We provide one full solution, and another nearly complete solution, to two problems from Jim Propp's 1999 article "Enumeration of matchings; problems and progress" (namely, Problems 30 and 31) involving divisibility properties of the number of matchings of two non-bipartite triangular graphs. We extend our method of proof of the second problem to show that the number of matchings of any graph composed of tetrahedral cells, such that the number of cells is suitably few relative to the number of vertices, is divisible by a power of 3; in particular, we then exhibit a collection of non-planar graphs whose number of matchings is divisible by 3.
\end{abstract}

\maketitle

\section{Introduction}

In 1996, shortly after the upsurge of research interest into exact enumeration of perfect matchings of graphs\footnote{A perfect matching (often simply called a matching) of a graph is a collection of disjoint edges that collectively contain all the vertices. We denote by $\M(G)$ the number of perfect matchings of the graph $G$. If $G$ is a weighted graph, the weight of a matching $\mu$ of $G$ is the product of the weights of the edges in $\mu$, and $\M(G)$ denotes the sum of the weights of all its matchings; the latter is called the matching generating function of $G$.} triggered by the solution of the Aztec diamond problem in \cite{eklp}, Jim Propp posed a list of twenty open problems in the enumeration of matchings in a MSRI lecture. Almost all of these problems, with one major exception in problem 18, dealt with bipartite graphs.

By 1999, more than half of the problems on Propp's original list had been solved, and he published the updated list \cite{proppprobs}, which, in addition to updating the status of the previous problems, contained twelve new problems (bringing the total number of problems to 32). Most of these problems concern nonbipartite graphs, to which less of the existing machinery of the subject can be applied. To date, most of the new problems from Propp's 1999 list have still not been solved. In this paper we treat two of these: Problems 30 and 31.

In Section 2, we address Propp's problem 30, originally conjectured by Matt Blum:

\begin{problem}
	Show that for the isoceles right triangle with extra edges, the number of matchings is always a multiple of 3. Furthermore, show that the exact power of 2 dividing the number of matchings is $2^{n/4}$ when $n$ is 0 modulo 4, and $2^0(=1)$ when $n$ is 3 modulo 4.
\end{problem}

	\begin{figure}[ht]
		\includegraphics[scale=1]{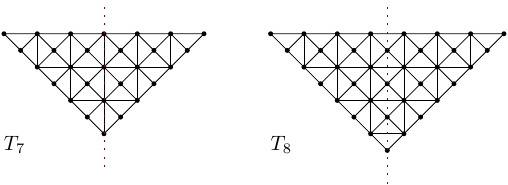}
		\caption{The isoceles right triangle with extra edges $T_n$.}
	\end{figure}
	
We will show first, by appealing to the horizontal symmetry of the graph, that 2 does not divide $\M(T_n)$ when $n$ is 3 modulo 4, and that $2^{n/4}$ divides $\M(T_n)$ when $n$ is 0 modulo 4. (We are, however, unable to show that no greater power of 2 divides $\M(T_n)$ in the latter case; this is the ``missing one-eighth'' referenced in the title of the paper.)
We next find a collection of subgraphs of $T_n$ whose number of matchings is divisible by 3 and use them to show that 3 divides the number of matchings of $T_n$.

We then devote Section 3 to a proof from scratch of Propp's next problem, problem 31:
	
	\begin{problem}
		Show that for the equilateral triangle graph with extra vertices
		and edges, the number of matchings is always a multiple of 3.
	\end{problem}
	
	\begin{figure}[ht]
		\includegraphics[scale=0.8]{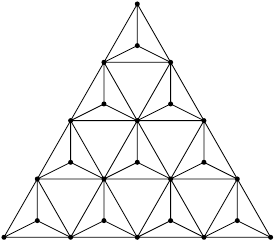}
		\caption{The equilateral triangle graph $E_n$ with extra vertices and edges.}
	\end{figure}
	
In fact, we show that the number of matchings is divisible by $3^{n/2}$. Our proof will involve a local replacement procedure, similar to the Spider Lemma introduced in \cite{gendom}, for each triangle with midpoint of the above graph.

Finally, in Section 4, we will extend the ideas of the previous section to graphs composed of tetrahedral cells, yielding a highly general class of graphs whose number of matchings is divisible by 3. In particular, we demonstrate some non-planar graphs which we show fall into the previous class, adding to a very limited body of knowledge concerning the enumeration of matchings of non-planar graphs.
	
	\section{Problem 30}
	
	Problem 30 has two halves: first concerning divisibility by 3, then concerning the exact power of 2 that divides the number of matchings. The latter half, regarding the power of 2, is further split into two cases, according to $n$ being congruent to 0 or 3 modulo 4; in the former case, the statement that $2^{n/4}$ is the exact power dividing the number of matchings is implicitly two statements, that $2^{n/4}$ does divide the number of matchings, and that no higher power does so. In this section we prove all these statements except the one saying that when $n$ is a multiple of 4, no power of 2 greater than $2^{n/4}$ divides $\M(E_n)$. Those concerning the power of 2 form Theorem \ref{two}, while Theorem \ref{three} covers divisibility by 3.
	
	\begin{theorem}\label{two}
		Let $T_n$ be the isoceles right triangle graph with extra edges shown in Figure \ref{isotri}, where $n$ is the number of vertices along a side of the triangle. Then $\M(T_n)$ is divisible by $2^{n/4}$ when $n \equiv 0 \bmod 4$ and is not divisible by $2$ when $n \equiv 3 \bmod 4$.
	\end{theorem}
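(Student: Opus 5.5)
We will exploit the reflection symmetry of $T_n$ across its axis of symmetry (the altitude from the right-angle vertex), which the introduction flags as the key tool. The natural instrument is Ciucu's Factorization Theorem for graphs with a reflection symmetry, or more elementarily a pairing argument. Let $\sigma$ be the reflection, and let $\mu \mapsto \sigma(\mu)$ be the induced involution on perfect matchings of $T_n$. Matchings not fixed by $\sigma$ pair off, so
\[
\M(T_n) \equiv \#\{\text{$\sigma$-invariant matchings of } T_n\} \pmod 2 .
\]
A $\sigma$-invariant matching is determined by its restriction to one side of the axis. Each vertex on the axis must be matched either along an edge lying on the axis, or by an edge crossing the axis that is itself $\sigma$-fixed, so it must be perpendicular to the axis and join a vertex to its mirror image. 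We would first describe precisely the vertices and edges of $T_n$ lying on or straddling the axis, using Figure~\ref{isotri}.

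For $n\equiv 3 \pmod 4$, the first step is to show that the invariant matchings are in bijection with matchings of a smaller, essentially "half" graph $H$. This $H$ is obtained by cutting along the axis and deleting the forced axis edges, and we expect it to be highly constrained. The goal is then to show $\M(H)$ is odd. Our plan is to iterate: either $H$ has further forcing near its boundary (degree-one vertices propagating forced edges) that reduces it to a single matching, or $H$ again has a symmetry allowing the same parity trick. We expect forcing along the staircase boundary produced by cutting a right isosceles triangle in half to pin down the matching completely. This would give exactly one invariant matching, hence $\M(T_n)$ odd.

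For $n\equiv 0 \pmod 4$, parity alone is insufficient. We would instead apply Ciucu's Factorization Theorem in its general (non-bipartite, weighted) form. Label the axis vertices $a_1,\dots,a_{w}$, cut appropriately alternating above/below, and reweight the axis edges by $1/2$. This gives $\M(T_n)=2^{w(T_n)}\M(G^+)\M(G^-)$, where $w$ is half the number of axis vertices. Because of the halved weights, $\M(G^\pm)$ may only be rational. The key steps are therefore: (i) count the axis vertices to obtain a power of $2$ comparable to $2^{n/4}$; (ii) check how many edges of weight $1/2$ survive in $G^\pm$ and bound the resulting denominators, showing the net exponent of $2$ is at least $n/4$. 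Alternatively, if the axis carries no weighted edges after cutting, the factorization is integral and only (i) is needed.

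The main obstacle is that the factorization theorem is stated for bipartite planar graphs, while $T_n$ is non-bipartite because of the extra edges. We therefore expect to redo the argument by hand. Group the matchings by their restriction to the edges crossing or lying on the axis. Then use the involution that reflects the portion of the matching on one side, within each independent "cell" between consecutive axis vertices. If roughly $n/4$ such cells can be flipped independently, each orbit has size divisible by $2^{n/4}$, except orbits containing a cell fixed by its flip. So the crux is to show that the fixed configurations either cannot occur when $n\equiv 0\pmod 4$ or contribute a multiple of $2^{n/4}$ themselves, presumably via the same forcing analysis as in the odd case.
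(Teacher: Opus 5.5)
Your handling of $n\equiv 3 \pmod 4$ is the paper's argument. The reflection pairs off non-symmetric matchings, so $\M(T_n)$ has the parity of the number of symmetric matchings, and one shows there is exactly one. Two points need fixing.
\begin{itemize}
\item An axis vertex in a symmetric matching cannot be matched by a crossing edge. If it is matched to $u$, it is also matched to $\sigma(u)$, so $u=\sigma(u)$ lies on the axis. This is what forces the axis edges $a_ib_i$, after which each half is matched internally.
\item The uniqueness of the matching of the half must be proved, not expected. The paper does it by forcing: the bottom-most vertex on one side has only one neighbor on that side, which forces its edge and then the rest of its column, and the same argument applies column by column.
\end{itemize}

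The real gap is $n\equiv 0\pmod 4$. As you note, the Factorization Theorem needs bipartiteness, and your substitute does not work. The axis vertices do not cut one half of $T_n$ into independent ``cells''. Reflecting the part of a matching inside such a region does not in general give a perfect matching, since matching edges cross the region's boundary. So there is no free $(\mathbb{Z}/2)^{n/4}$-action, and the fixed configurations are left completely uncontrolled. Your remark that with no axis edges the factorization would be integral is close to the right idea, but you do not need $G^{\pm}$ at all. The paper uses two facts instead.
\begin{itemize}
\item For $n\equiv 0\pmod 4$, no edge of $T_n$ lies on the axis, whose vertices are $a_1,b_1,\dots,a_{n/4},b_{n/4}$. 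So every perfect matching sends each $a_i$ strictly to one side. Recording these sides partitions the matchings into $2^{n/4}$ classes, each exactly the set of perfect matchings of one reduced subgraph (delete, at each $a_i$, the edges on the other side).
\item Ciucu's lemma (Lemma~\ref{reduced}) says all reduced subgraphs of a symmetric graph have the same number of matchings. The paper applies it with no bipartiteness assumption.
\end{itemize}
Together these give $\M(T_n)=2^{n/4}\M(R)$ with $\M(R)$ an integer.

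The bijection behind the lemma is global rather than a local flip. The cycle $C$ through $a_i$ in $\mu\cup\sigma(\mu)$ is $\sigma$-invariant and meets the axis in exactly one other vertex; planarity and a parity count show this vertex is some $b_j$. Swapping $\mu$ with $\sigma(\mu)$ along $C$ moves $a_i$ to the other side while leaving the side of every other $a_k$ unchanged.
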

	
	\begin{figure}[ht]
		\includegraphics[scale=1.4]{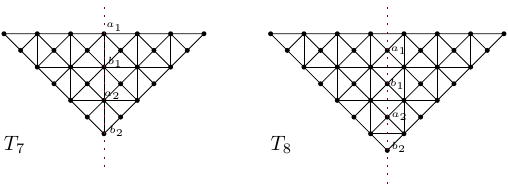}
		\caption{$T_n$ for $n\equiv3 \bmod 4$ and $n\equiv 0 \bmod 4$.}\label{isotri}	\end{figure}
	
	The proof employs the following useful lemma proven by Ciucu in \cite{fact}, which we restate below without proof. Given a symmetric graph $G$, denote by $a_1, b_1, a_2, b_2, \ldots, a_{w(G)}, b_{w(G)}$ the vertices along its symmetry axis, where the width $w(G)$ is equal to half the number of vertices along its symmetry axis. A \textit{reduced} subgraph of $G$ is obtained by removing all of the edges incident to each $a_i$ either above or below the symmetry axis.
	
	\begin{lemma}[Ciucu, \cite{fact}]\label{reduced}
		All $2^{w(G)}$ reduced subgraphs of a symmetric weighted graph $G$ have the same matching generating function.
	\end{lemma}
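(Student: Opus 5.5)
The plan is to reduce to changing the side at a single axis vertex, and then build a weight-preserving bijection by reflecting part of a matching. Any two reduced subgraphs are joined by a chain of reduced subgraphs in which consecutive members differ only at one $a_i$. So it suffices to fix an index $i$ and the choice of sides at all $a_j$ with $j\neq i$, and to compare two graphs: $G'$, where the edges of $a_i$ above the axis are removed, and $G''$, where those below are removed. I will write $\rho$ for the reflection across the symmetry axis. It is a weight-preserving automorphism of $G$ that fixes every axis vertex.

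For a matching $\mu$ of $G'$, I would consider the symmetric difference $\mu\,\triangle\,\rho(\mu)$, computed in $G$. It is a disjoint union of even cycles and is invariant under $\rho$. In $\mu$ the vertex $a_i$ is matched strictly below the axis, and in $\rho(\mu)$ it is matched strictly above. Hence $a_i$ lies on a cycle $C$ of $\mu\triangle\rho(\mu)$, and $\rho(C)=C$. The reflection acts on $C$ as an involution fixing $a_i$. An involution of a cycle that fixes a vertex is a reflection of that cycle, so it fixes exactly one further vertex $x$ of $C$, lying on the axis. (I assume, as in Ciucu's setting, that no edges run along the axis, so an edge is never fixed.)

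The map is $\mu\mapsto\mu'=\mu\,\triangle\,C$, that is, $\mu$ with its edges on $C$ replaced by those of $\rho(\mu)$. This is again a perfect matching of $G$. On each of the two arcs of $C$ between $a_i$ and $x$, it swaps the $\mu$-edges with the mirror images of the $\rho(\mu)$-edges. Because of this mirror pairing, $\mu'$ has the same weight as $\mu$. In $\mu'$ the vertex $a_i$ is matched above the axis. The same construction applied to $\mu'$ yields the same cycle $C$ and returns $\mu$, so the map is an involution-type bijection. Summing weights then gives $\M(G')=\M(G'')$.

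The main obstacle is checking that $\mu'$ really is a matching of $G''$. Every vertex off $C$ is untouched, and $a_i$ has switched sides as required. The only danger is the second fixed vertex $x$, which also switches sides. If $x$ were some $a_j$, its prescribed constraint would be violated. So the key claim is that $x$ is always a $b_j$.

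I would prove this with a planarity and parity argument. The cycle $C$ is a symmetric Jordan curve meeting the axis only at $a_i$ and $x$. Consider the axis vertices strictly between $a_i$ and $x$. Each is either on another symmetric cycle of $\mu\triangle\rho(\mu)$, which meets the axis in exactly two vertices nested inside $C$, or it is matched identically in $\mu$ and $\rho(\mu)$. The latter is impossible, since a vertex matched to $v$ in $\mu$ is matched to $\rho(v)\neq v$ in $\rho(\mu)$. So the interior axis vertices come in pairs, one pair per nested symmetric cycle. Their number is therefore even, and by the alternation $a_1,b_1,a_2,b_2,\dots$ this forces $x$ to be a $b_j$.

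The step I would scrutinize most is the claim that nested symmetric cycles cannot interleave. This needs planarity of $G$, since the cycles are vertex-disjoint closed curves, and I would make that hypothesis explicit if the lemma requires it. If this nesting argument proves awkward, a fallback is an induction on $i$ that pairs the axis vertices from the end of the axis inward.
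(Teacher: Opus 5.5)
Your proof is correct. The paper gives no proof of this lemma of its own; it is quoted from Ciucu. Your bijection is, in substance, Ciucu's original argument: superimpose $\mu$ with its mirror image, trade $\mu$ for $\rho(\mu)$ along the symmetric cycle $C$ through $a_i$, and use the Jordan curve theorem to show that the other axis vertex of $C$ is some $b_j$.

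The step you flagged is easier than you fear. You never need the inner cycles to avoid interleaving with one another. You only need each symmetric cycle $C_y$ through an axis vertex $y$ strictly between $a_i$ and $x$ to lie inside $C$. This is immediate, since $C_y$ is connected, disjoint from $C$, and contains the interior point $y$. Combine this with the fact that $\ell$ meets the interior of $C$ exactly in the open segment from $a_i$ to $x$: the two outer rays are unbounded, while the interior of $C$ is a nonempty connected $\rho$-invariant set and so must meet $\ell$. Together these pair off the axis vertices in that segment.

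You are right to make planarity explicit, meaning a symmetric plane embedding with $a_1,b_1,a_2,b_2,\ldots$ in order along the axis. Without it the statement fails. For example, let $a_1,a_2$ share the mirror-image neighbours $u,\rho(u)$ and let $b_1,b_2$ share $v,\rho(v)$, with no other edges. Then the four reduced subgraphs have $0,2,2,0$ matchings.

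Two minor remarks. First, an edge fixed by $\rho$ can never lie in $\mu\,\triangle\,\rho(\mu)$, so your no-axis-edge assumption is needed only to force $a_i$ off the axis. Second, invertibility of your map follows from the identity $\mu'\,\triangle\,\rho(\mu')=\mu\,\triangle\,\rho(\mu)$.
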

	
\begin{proof}[Proof of Theorem \ref{two}]
	We first consider the case when $n \equiv 0 \bmod 4$. In this case, $T_n$ is symmetric about a vertical line in the orientation of Figure \ref{isotri}; label the vertices on this line $a_1, b_1, \ldots, a_{n/4}, b_{n/4}$ from top to bottom. Since no edges of $T_n$ lie on the symmetry axis when $n \equiv 0 \bmod 4$, every matching of $T_n$ must match each $a_i$ either to the left or the right of the symmetry axis. This means that each matching of $T_n$ is a matching of exactly one reduced subgraph of $T_n$. In particular, this gives a partition of the set of matchings of $T_n$ into $2^{n/4}$ classes, each of which contains the matchings which restrict to perfect matchings of a particular reduced subgraph. Since by Lemma \ref{reduced}, all reduced subgraphs have the same number of matchings,
	\begin{equation}\label{teven} \M(T_n) = \sum_{G \text{ a reduced subgraph of }T_n} \M(G) = 2^{n/4} \M(R)\end{equation}
	where $R$ is any fixed reduced subgraph of $T_n$. Thus $\M(T_n)$ is divisible by $2^{n/4}$.
	
	Now consider the case when $n \equiv 3 \bmod 4$. We partition the matchings of $T_n$ into those which are symmetric about the horizontal symmetry axis and those which are not. Since reflection about the symmetry axis is an involution on the set of matchings, and any matching fixed by this involution is symmetric, there must be an even number of matchings which are not symmetric.
	
	On the other hand, to form a symmetric matching of $T_n$, we must match the symmetry axis internally, which forces us to use the edges $\{ a_1b_1, a_2b_2, \ldots, a_nb_n\}$. In such a matching, the subgraph to the right of the symmetry axis will be matched internally. However, this subgraph has only one perfect matching, which contains exactly those edges which are northeast of a vertex of degree 4. We show this as follows: note that the bottom-most vertex to the right of the symmetry axis has only one neighbor to its right, so it is forced to be matched northeast. All vertices in this column are subsequently forced in the same way. The same argument applies to each of the remaining columns in succession.

		\begin{figure}[ht]
			\includegraphics[scale=1.6]{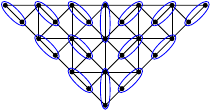}
			\caption{The unique symmetric matching of $T_7$.}\label{symmmatching}
		\end{figure}
		
		By symmetry, the same observation holds for the subgraph to the left of the symmetry axis; thus, there is exactly one symmetric matching of $T_n$. It follows that, for $n \equiv 3 \bmod 4$, $T_n$ has an odd number of matchings.
	\end{proof}

\begin{remark}
As mentioned previously, Problem 30 additionally conjectures that the power of 2 dividing $\M(T_n)$ is \textit{exactly} $n/4$ when $n \equiv 0 \bmod 4$. We have not proven this part of the conjecture, but we have substantial numerical data (up to $n=100$) which validates it. Further, Equation \ref{teven} indicates that demonstrating the existence of any reduced subgraph of $T_n$ ($n \equiv 0  \bmod 4$) with an odd number of matchings would be enough to prove this missing part of Problem 30.
\end{remark}

Next, we address the first half of Problem 30, namely the statement that $\M(T_n)$ is divisible by 3. To do so, we first compute exactly the matching number of the following family of subgraphs of $T_n$.

Define $V_n$ for $n \geq 1$ to be the graph shown in Figure \ref{vees}, which is the restriction of $T_{n+2}$ to its top three rows of vertices. Further, let the vertices on the bottom row be labeled, from left to right, $\{x_1, \ldots, x_n\}$.  
	
	\begin{figure}[ht]
		\includegraphics[scale=1]{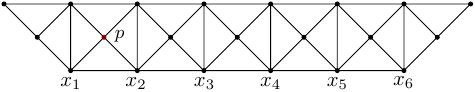}
		\caption{The graph $V_n$ for $n=6$.}\label{vees}
	\end{figure}
	
\begin{lemma}
	If $S \subset \{x_1, \ldots, x_n\}$ has cardinality $s$, with $n-s$ odd, then
	\[ \M( V_n \setminus S) = 3 \cdot 6^{\frac{n-s-1}2}.\]
\end{lemma}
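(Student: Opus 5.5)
We argue by induction on $n$, peeling off the leftmost part of $V_n$. Since we cannot see the figure, we first fix the structure. $V_n$ is the top three rows of $T_{n+2}$: a top row of $n-2$ vertices, a middle row of $n-1$ vertices, and the bottom row $x_1,\dots,x_n$. Horizontal edges join consecutive vertices in each row, adjacent rows are joined by the triangular-lattice edges, and the extra diagonal edges come from $T_n$. The vertex count is then $3n-3-s$, and $n-s$ odd makes this even, consistent with the statement.

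The first structural step is the following claim: in any perfect matching of $V_n\setminus S$, the top two rows restricted to a leftmost block behave rigidly. We work by ``columns'' of width $2$. The leftmost vertex of the top row has only a few neighbours, so its matching partner is forced up to a small number of choices. We expect each width-$2$ block of the top two rows to be matched internally, or else to send exactly one vertex down to the bottom row.

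With this structure, we set up a transfer argument. Remove the leftmost portion of the graph containing the first two bottom vertices $x_1,x_2$ together with the corresponding top and middle vertices. We then show that
\[
\M(V_n\setminus S)=c\cdot \M(V_{n-2}\setminus S'),
\]
where $S'$ is the shifted image of $S\cap\{x_3,\dots,x_n\}$ and $c$ depends on how many of $x_1,x_2$ lie in $S$. The parity condition must be preserved in each case.

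\begin{itemize}
\item If neither $x_1$ nor $x_2$ lies in $S$, we expect $c=6$.
\item If both lie in $S$, we need the reduction to land in $V_{n-2}$ with $s-2$ removed vertices and the same exponent. This requires $c=1$, since $n-s$ is unchanged.
\item If exactly one lies in $S$, then $n-s$ drops by $1$ rather than $2$, so the single-step reduction $n\to n-1$ must be used instead, with factor $\sqrt6$. This is impossible, so the peeling must instead be organised around consecutive elements of $S$.
\end{itemize}

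A better formulation is therefore to do induction on $n$ through the leftmost missing bottom vertex, or the absence of one. The cleanest route is likely a bijective or transfer-matrix argument. Encode a matching by the set of bottom vertices matched upward, and show that the middle and top rows then contribute a product of local factors. The claim to establish is that the total is $3\cdot 6^{k}$, where $2k+1$ is the number of remaining bottom vertices. Each remaining bottom vertex is either matched horizontally to a remaining neighbour (only possible when the two are consecutive remaining vertices) or matched upward. Reading the remaining bottom vertices $y_1,\dots,y_{2k+1}$ left to right, we show the following:

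\begin{enumerate}
\item The gaps created by $S$ do not affect the count, because the middle row absorbs them.
\item The count factorises into a base factor $3$ for a single remaining vertex, times a factor $6$ for each added pair $y_{2j},y_{2j+1}$.
\end{enumerate}

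Concretely, we would prove a refined lemma tracking the state of the middle row at each boundary (matched or unmatched), by a $2\times 2$ transfer matrix. We then check that removing a bottom vertex acts on this state vector in a way that leaves the final count invariant.

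The base cases are $n-s=1$, where we verify $\M=3$ directly for small $n$, and the unconstrained case $s=0$.

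The main obstacle is the invariance of the count under deleting bottom vertices: that the answer depends only on $n-s$ and not on which $x_i$ are removed. This must come from a local identity. Deleting $x_i$ versus deleting $x_{i+1}$ should give equal counts, which we would try to prove via a local bijection swapping the roles of $x_i$ and $x_{i+1}$ using their common middle-row neighbour. If that works, we may assume $S=\{x_{n-s+1},\dots,x_n\}$, and these deletions then force the right end of the graph. This reduces everything to a single family, where the recursion $a_{m+2}=6a_m$ with $a_1=3$ is checked by expanding along the left end.
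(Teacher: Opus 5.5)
Your proposal has a genuine gap. It is a plan in which both substantive steps are announced rather than proved. (Your picture of $V_n$ is also off. It is the top three rows of $T_{n+2}$ with a full side of the triangle on top, so its rows have $n+2$, $n+1$, $n$ vertices and $V_n$ has $3n+3$ vertices, not $3n-3$. Any local analysis has to be done on that graph.)

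The first gap is the case $S=\emptyset$. You say the recursion $a_{m+2}=6a_m$ ``is checked by expanding along the left end.'' But expanding at the left end produces graphs outside the family $\{V_m\}$, and you give no argument identifying their counts. The paper conditions on the partner of the second middle-row vertex $p$. In each of the four cases a bridge appears, and parity either forbids or forces its use. In two of the four cases the leftover graph is not $V_{m-2}$ but a graph $H_{m-2}$ whose end is flipped: its leftmost vertex has a bottom-row neighbour instead of a top-row one. The missing observation is that in both $V_m$ and $H_m$ the interior edge at the leftmost middle-row vertex can never be used, because the leftmost vertex must be matched to one of its endpoints. After deleting that edge the two graphs are isomorphic, so $\M(H_m)=\M(V_m)$. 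Only then do the four cases add up to $(2+2+1+1)\M(V_{m-2})$.

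The second gap is more serious. You rightly call the independence of $\M(V_n\setminus S)$ from the positions of the deleted vertices the main obstacle, but you delegate it to a local bijection between $V_n\setminus(S_0\cup\{x_i\})$ and $V_n\setminus(S_0\cup\{x_{i+1}\})$ that you never construct. None is apparent. In the first graph the top vertex above $x_i$ is a cut vertex; in the second it is the one above $x_{i+1}$. So the matchings decompose differently, and the counts agree only as products. The transfer-matrix sketch does not fill this either, since no states or matrix are specified.

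The paper needs no bijection. It inducts on $n$ via the rightmost $x_i\in S$, with $v_i$ the top vertex above $x_i$.
\begin{itemize}
\item If $x_{i-1}\in S$, the middle vertex $u$ between $x_{i-1}$ and $x_i$ has only the neighbours $v_{i-1},v_i$, so the edge $v_{i-1}v_i$ is unusable. Deleting it and contracting the path $v_{i-1}uv_i$ to a single vertex yields $V_{n-1}$ with $s-1$ deleted bottom vertices.
\item If $x_{i-1}\notin S$, then $v_i$ is a cut vertex and parity dictates which side it is matched into. The count then factors as a product of two smaller instances, one of which picks up a factor $2$ from the same $H$-versus-$V$ identity.
\end{itemize}
Your observation that a terminal block $S=\{x_{n-s+1},\ldots,x_n\}$ forces the right end and leaves $V_{n-s}$ is correct. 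However, it helps only once invariance is known. As written, the proposal establishes the formula neither for $S=\emptyset$ nor for general $S$.
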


As $V_n$ has $3n+3$ vertices, in order for $V_n \setminus S$ to be a matchable graph, we must have $3n+3-s \equiv n+1-s \equiv 0 \bmod 2$, hence the stipulation that $n-s$ be odd.

\begin{proof}
	Consider the case when $S=\emptyset$ (and $n$ is odd). In this case, we may partition the matchings of $V_n$ by the edge they contain incident to the second vertex in the middle row (which we label $p$), as shown in Figure \ref{veespart}. In all cases, after removing $p$ and the vertex it is matched to, the remaining edge above or below $p$ is a bridge, meaning that removing it would disconnect the graph.  
		
		\begin{figure}[ht]
			\includegraphics[scale=1]{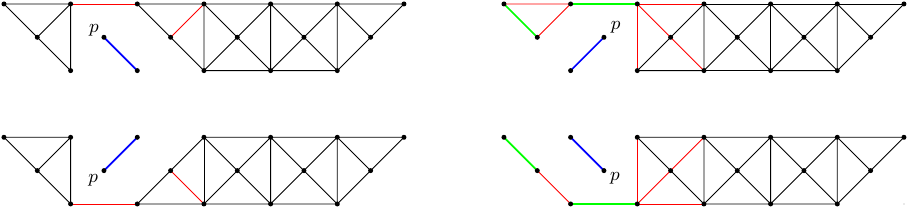}
			\caption{Partitioning to create a recurrence for $V_n$. In each diagram, we force the blue edge incident to $p$ to be included in our matching; other forced edges are colored green, while edges which cannot be used in a matching are colored red.}\label{veespart}
		\end{figure}
		
	For the two cases on the left of Figure \ref{veespart}, where $p$ is matched to the right, this bridge cannot be contained in a matching, as using it would disconnect the graph into two components with an odd number of vertices; thus, we may remove it. Then, the component to the right of $p$ has $\M(V_{n-2})$ matchings in both cases. Indeed, when $p$ is matched down and to the right, the component to the right of $p$ is exactly $V_{n-2}$, since we have removed the two leftmost vertices of each row of $V_n$.
		
		\begin{figure}[ht]
			\includegraphics[scale=1]{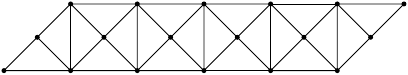}
			\caption{The graph $H_n$ for $n=5$.}\label{aitch}
		\end{figure}
		
		If $p$ is matched up and to the right, the component to the right of $p$ is similar to $V_{n-2}$, but the leftmost vertex has a neighbor on the bottom row and no neighbors on the top (see Figure \ref{aitch}); we call this graph $H_{n-2}$. However, for both the graphs $H_n$ and $V_n$, the leftmost vertex in the middle row has one edge that does not border the infinite face (colored red in Figure \ref{veespart}). In both cases, this edge cannot be used in a matching, as the leftmost vertex of the graph must be matched to one of its two endpoints; thus, we may remove it without changing the number of matchings. After removing this edge, the resulting graphs are isomorphic, so
		\begin{equation} \label{heqv}
			\M(H_n) = \M(V_n).
		\end{equation}
		To conclude the analysis of these two cases, we see that in each case where $p$ is matched to the right, there are two ways to match the four-vertex component to the left of $p$. Thus, each of these cases contributes $2\M(V_{n-2})$ matchings.
		
		For the two cases when $p$ is matched to the left (shown on the right in Figure \ref{veespart}), we again have a bridge above (resp., below) $p$. However, this time there are an odd number of vertices remaining on either side of $p$, so the bridge must be included in any matching of the graph. In each case, there will be one remaining edge to the left of $p$, which also must be included in any matching, and the component to the right of $p$ is either $V_{n-2}$ or $H_{n-2}$, so has $\M(V_{n-2})$ matchings. So each of these cases contributes $\M(V_{n-2})$ matchings.
		It follows that
		\[ \M(V_n) = 2\M(V_{n-2}) + 2\M(V_{n-2}) + \M(V_{n-2}) + \M(V_{n-2}) = 6\M(V_{n-2})\]
		and since $\M(V_1)=3$ (as can be readily checked), the formula holds when $s=0$.

		For the case when $S \neq \emptyset$, we proceed by induction on $n$. Let $x_i$ be the rightmost vertex contained in $S$. If $x_{i-1} \in S$ also, denote by $v_i$ the vertex above $x_i$, $v_{i-1}$ the vertex above $x_{i-1}$, and $u$ the vertex in the middle of the square formed by these four vertices; this labeling is shown on the left of Figure \ref{deletedverts}. Since using the edge between $v_i$ and $v_{i-1}$ in a matching would isolate $u$, we may delete that edge. Then, the remaining path of length 2 between $v_i$ and $v_{i-1}$ may, by the vertex splitting lemma, be replaced by a single vertex (which we label $v_{i-1}'$) without changing the number of matchings. The resulting graph, as seen in Figure \ref{deletedverts}, is $V_{n-1}$ with all vertices of $S$ removed except for $x_i$. Thus
		\[ \M(V_n \setminus S) = \M(V_{n-1} \setminus \{S\setminus x_i\}) = 3 \cdot 6 ^{(n-1)-(s-1)-1} = 3 \cdot 6^{n-s-1} \]
		by the inductive hypothesis.
		
		\begin{figure}[ht]
			\includegraphics[scale=1]{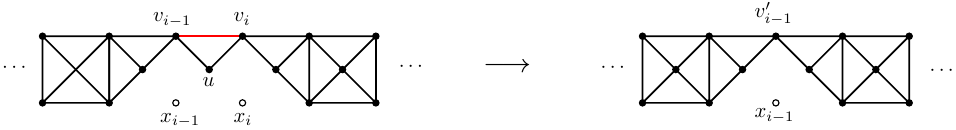}
			\caption{The case $x_{i-1} \in S$. The red edge cannot be included in a matching.}\label{deletedverts}
		\end{figure}

		For the remaining case, suppose $x_{i-1} \notin S$. In this case, $v_i$ is a cut vertex for the graph, so it must be matched to the side with an odd number of vertices. Suppose that $v_i$ must be matched to the right (the other case can be handled similarly). Then we may remove the edges to the left of $v_i$, disconnecting the graph as shown in Figure \ref{deletedvert}.
		
		\begin{figure}[ht]
			\includegraphics[scale=1]{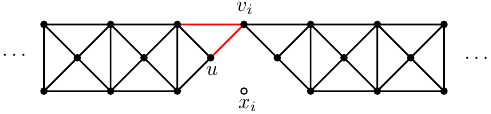}
			\caption{The case $x_{i-1} \notin S$. Red edges cannot be included in a matching.}\label{deletedvert}
		\end{figure}
		
		Now, the rightmost vertex of the component to the left of $v_i$, labeled by $u$ in Figure \ref{deletedvert}, must either be matched down and to the left or up and to the left. In the first case, the rest of the component is $V_{i-1} \setminus (S \setminus x_i)$, and in the second it is $H_{i-1} \setminus (S \setminus x_i)$. However, the argument leading up to equation \ref{heqv} only depends on the end of the graph, so still holds if some vertices of the bottom row are removed. Thus, the component to the left of $v_i$ has $2\M(V_{i-1} \setminus (S \setminus x_i))$ matchings.
		Since the component to the right has $\M(V_{n-i})$ matchings, by choice of $x_i$,
		\[ \M(V_n \setminus S) = 2\M(V_{i-1} \setminus (S \setminus \{x_i\})) \M(V_{n-i}) = 2 \cdot 3 \cdot 6^{i-1 - (s-1)-1} \cdot 3 \cdot 6^{n-i-1} = 3 \cdot 6^{n-s-1}\]
		again by the inductive hypothesis. The result follows.
		\end{proof}
		
		We are now ready to prove the second part of Problem 30.
		
		\begin{theorem}\label{three}
		$\M(T_n)$ is divisible by 3 for all $n$.
		\end{theorem}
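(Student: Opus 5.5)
The plan is to cut $T_n$ along the horizontal line just below its top three rows, so that the top part is a copy of $V_{n-2}$ (in the paper's notation, $V_{m}$ is the restriction of $T_{m+2}$ to its top three rows), and the rest is the graph $B$ formed by the remaining rows of $T_n$. Every perfect matching $\mu$ of $T_n$ determines the set $S$ of bottom-row vertices $x_1,\dots,x_{n-2}$ of $V_{n-2}$ that $\mu$ matches downward into $B$, since the only edges between the two parts join the $x_j$ to the row below. Classifying the matchings by the set $S$ together with the specific crossing edges used gives
\[ \M(T_n)=\sum_{S}\ \sum_{\text{crossing edge choices for } S} \M(V_{n-2}\setminus S)\cdot \M\bigl(B\setminus N(S)\bigr), \]
where $N(S)$ denotes the set of vertices of $B$ matched to the vertices of $S$.

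By the preceding lemma, whenever $(n-2)-|S|$ is odd, each factor $\M(V_{n-2}\setminus S)=3\cdot 6^{(n-3-|S|)/2}$ is divisible by $3$. The remaining terms have $(n-2)-|S|$ even. In those terms $V_{n-2}\setminus S$ has an odd number of vertices, so it has no perfect matching and the term vanishes. Hence every nonzero term of the sum is divisible by $3$, and so is $\M(T_n)$.

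The main check is the parity bookkeeping. $V_{n-2}$ has $3(n-2)+3$ vertices, so $V_{n-2}\setminus S$ is even exactly when $n-2-|S|$ is odd. That is precisely the hypothesis of the lemma, so every matchable piece is covered. I also need to confirm from the figure that the edges leaving the top three rows of $T_n$ all emanate from the bottom row $x_1,\dots,x_{n-2}$ of $V_{n-2}$, and that the other endpoints of these edges lie in the rows below. This holds because the top three rows of $T_{n}$ are exactly $V_{n-2}$, and edges of $T_n$ join only adjacent rows or cells.

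A secondary obstacle is the small cases $n\le 3$, where $V_{n-2}$ is not defined or $V_{n-2}=V_1$ must be identified correctly. I would check these cases directly, or observe that $T_n$ for small $n$ has no perfect matching or a number of matchings already divisible by $3$, e.g. $V_1$ itself giving $3$. If the figure's indexing of rows differs by one, so that the top three rows are $V_{n-1}$ instead, the same argument applies verbatim with $n-1$ in place of $n-2$.
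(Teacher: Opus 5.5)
Your proposal is correct and is essentially the paper's own proof: split off the top three rows of $T_n$ as $V_{n-2}$, sort matchings by the set $S$ of vertices $x_j$ matched downward, and apply the preceding lemma to each factor $\M(V_{n-2}\setminus S)$; the terms with $n-2-|S|$ even vanish by parity, a point the paper leaves implicit. Your sum records the specific crossing edges and uses $\M(B\setminus N(S))$ as the second factor, which is slightly more careful than the paper's displayed formula with $\M\bigl(T_n\setminus(V_{n-2}\setminus S)\bigr)$; read literally, that formula would also count configurations in which two adjacent vertices of $S$ are matched to each other, but in either version every term is a multiple of $\M(V_{n-2}\setminus S)$, so the conclusion is unaffected.
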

		
		\begin{proof}
		Consider the subgraph of $T_n$ consisting of the top three rows (in the orientation of Figure \ref{isotri}), which is $V_{n-2}$; label the third row of $T_n$ by $x_1, \ldots, x_{n-2}$ as in the lemma above. A matching of $T_n$ may be constructed by selecting which elements of $\{x_1, \ldots, x_{n-2}\}$ are matched within $V_{n-2}$ and which are matched below: hence
		\[ \M(T_n) = \sum_{S \subset \{x_1, \ldots, x_{n-2}\}} \M(V_{n-2} \setminus S) M\big(T_n \setminus (V_{n-2} \setminus S)\big).\]
		Since by the preceding lemma, $\M(V_n \setminus S)$ is divisible by 3 for all sets $S$, the statement follows.
		\end{proof}
		
		We observe that only $V_{n-2}$ plays a role in proving this divisibility by 3; the structure of the rest of $T_n$ does not contribute to the proof. This implies that a more general theorem is true for graphs that have a subgraph $V_n$ ``attached'' to the rest of the graph at the $x_i$'s. We state this more formally as the following corollary.
		
		\begin{cor}
		Let $G$ be any finite graph, and suppose that there exists an isomorphism to a subgraph $H$ of $G$, denoted $\phi : V_n \xrightarrow{\cong} H$, with the property that whenever $v \notin H$, $w \in H$, and $vw \in E(G)$, then $w = \phi(x_i)$ for some $i$. Then $3$ divides $\M(G)$.
		\end{cor}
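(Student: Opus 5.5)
The plan is to repeat the proof of Theorem \ref{three}, with $T_n$ replaced by $G$ and the top three rows replaced by the image $H=\phi(V_n)$. Write $X=\{\phi(x_1),\ldots,\phi(x_n)\}$. By hypothesis, every edge of $G$ with exactly one endpoint in $H$ has its endpoint in $H$ lying in $X$. Note, however, that $H$ need not be an induced subgraph of $G$: edges of $G$ joining two vertices of $H$ that are not images of edges of $V_n$ are not excluded by the hypothesis. I will first treat the intended case in which $H$ is induced, and then explain how the general case fits in.

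Fix a perfect matching $\mu$ of $G$. Let $S\subseteq X$ be the set of vertices of $H$ whose partner in $\mu$ lies outside $H$. Every other vertex of $H$ is matched inside $H$. So $\mu$ restricts to a perfect matching of $H\setminus S\cong V_n\setminus \phi^{-1}(S)$ (using that $H$ is induced), together with a perfect matching of $G\setminus(H\setminus S)$. Conversely, any such pair of matchings gives a perfect matching of $G$, and the map $\mu\mapsto(S,\mu|_{H\setminus S},\mu|_{\text{rest}})$ is a bijection. Hence
\[ \M(G)=\sum_{S\subseteq X}\M\bigl(V_n\setminus \phi^{-1}(S)\bigr)\,\M\bigl(G\setminus(H\setminus S)\bigr). \]
If $n-|S|$ is even, the graph $V_n\setminus\phi^{-1}(S)$ has an odd number $3n+3-|S|$ of vertices, so its factor is $0$. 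Otherwise the preceding lemma gives $\M(V_n\setminus\phi^{-1}(S))=3\cdot 6^{(n-|S|-1)/2}$. Every term is therefore divisible by $3$, and so is $\M(G)$.

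The main obstacle is the extra internal edges when $H$ is not induced, and the bijection step is where I would be most careful. If the statement is read with $H$ an induced subgraph, the argument above is complete; I expect this is the intended reading, since that is what ``attached only at the $x_i$'' means in the discussion preceding the corollary. If $H$ is not induced, a matching could use a chord between two vertices of $H$ that is not an edge of $\phi(V_n)$, and the sum would pick up terms not covered by the lemma. To handle this I would either add the induced-subgraph hypothesis explicitly, or treat such chords as outside edges. The latter works only for chords whose endpoints both lie in $X$: those can be absorbed by placing both endpoints in $S$. Chords with an endpoint outside $X$ genuinely break the argument, so the statement should be read with $H$ induced.
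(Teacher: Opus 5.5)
Your argument is the paper's: you sum over the set $S\subseteq X$ of attachment vertices matched outside $H$, and apply the lemma to each factor $\M(H\setminus S)$. Your caution about non-induced $H$ is justified. The paper's one-line proof silently uses that $H$ is induced, and the literal statement fails without it. For example, add to $V_1$ the edge joining the two ends of its top row. The matchings using the new edge correspond to the $2$ perfect matchings of the remaining four vertices, which span a $4$-cycle with one diagonal. So $\M=3+2=5$, while the hypothesis holds vacuously.

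There is, however, a slip in exactly the step you singled out, and it persists even when $H$ is induced. The converse half of your bijection is false, because $V_n$ has edges between consecutive bottom vertices (the bridge below $p$ in the paper's recurrence for $\M(V_n)$ is $x_1x_2$). If $\phi(x_i),\phi(x_{i+1})\in S$, a perfect matching of $G\setminus(H\setminus S)$ may pair them with each other. Gluing it to a perfect matching of $H\setminus S$ gives a matching of $G$ whose set of outward-matched vertices is a proper subset of $S$, and that matching is counted again in the term for the smaller set. So your displayed identity, which is also the paper's, overcounts.

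The repair is easy. For a perfect matching $\mu$ of $G$, let $S(\mu)$ be the set of vertices of $H$ covered by edges of $\mu$ not in $E(H)$. Then $\mu\mapsto(\mu\cap E(H),\,\mu\setminus E(H))$ is a genuine bijection onto pairs consisting of a perfect matching of $H\setminus S$ and a perfect matching of $G\setminus(H\setminus S)$ that uses no edge of $H$. Suppose $H$ is induced, or more generally all chords of $H$ have both ends in $X$ (this is your absorption remark made precise). Then the hypothesis gives $S(\mu)\subseteq X$. The second factor is still a nonnegative integer, and the first is a multiple of $3$ by the lemma, so the conclusion stands.
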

		
		\begin{proof}
		Using the setup of the previous theorem, we write
		\[ \M(G) = \sum_{S \subset \{\phi(x_1), \ldots, \phi(x_n)\}} \M(H \setminus S) \M(G \setminus (H \setminus S))\]
		and by the isomorphism, $\M(H \setminus S)$ is again divisible by 3 for all sets $S$.
		\end{proof}

		\section{Problem 31}
		
		In this section, we will prove a strengthening of Problem 31 of \cite{proppprobs}, which claimed that the number of matchings of the equilateral triangle graph with included midpoints $E_n$, described in Figure \ref{equil}, is divisible by 3. We begin, however, with the following necessary lemma, which is similar to the  well-known result known variously as the urban renewal or spider lemma (presented, for example, in section 5 of \cite{gendom}), but having a subunit of $E_n$ in place of the 4-cycle of urban renewal.
	
	\begin{figure}[ht]
		\includegraphics[scale=0.9]{equil.pdf}
		\caption{The equilateral triangle graph $E_n$ for $n=5$.} \label{equil}
	\end{figure}
	
\begin{lemma}\label{equillemma}
	Let $G$ be a graph which contains the local configuration on the left of Figure \ref{trianglesplit}, where the vertices of the inner triangle with midpoint have no neighbors besides the ones indicated. 
	Replacing this configuration by the local configuration on the right of Figure \ref{trianglesplit}, i.e., removing the vertices of the inner triangle with midpoint and all shown edges, and adding edges of weight $\frac 1 3$ between the outer vertices, results in a graph $G'$ with $\M(G') = \frac 1 3 \M(G)$.
\end{lemma}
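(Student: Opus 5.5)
The plan is a direct local case analysis, in the spirit of the proof of urban renewal. I read the left of Figure \ref{trianglesplit} as follows. There is an inner triangle with corners $p_1,p_2,p_3$, pairwise adjacent, and a midpoint $m$ adjacent to each $p_i$. Each corner $p_i$ is joined to the outer vertex $q_i$ by one edge of weight $1$, and $m$ has no outside neighbors. If the figure attaches the corners to the outside differently, the same bookkeeping applies with the obvious changes.

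First, partition the matchings of $G$ according to which corners are matched outside the inner configuration. The inner configuration has four vertices, and $m$ must be matched to some $p_k$. So the number of corners matched outward is even, and at most two; it is $0$ or $2$.

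\emph{Case 0.} No corner is matched outward. Then $m$ pairs with one of the three corners and the other two corners use their triangle edge. This gives exactly $3$ local completions. The rest of the matching is a perfect matching of $G\setminus\{p_1,p_2,p_3,m\}$, which is the graph $G'$ with the new edges deleted.

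\emph{Case 2.} Exactly the corners $p_i,p_j$ are matched outward, to $q_i,q_j$. Then $m$ is forced to pair with the remaining corner $p_k$, giving $1$ local completion. The rest of the matching is a perfect matching of $G'\setminus\{q_i,q_j\}$, again ignoring the new edges.

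Next, compute $\M(G')$ by conditioning on which, if any, of the three new edges $q_iq_j$ (weight $\frac13$) is used. At most one can be used, since any two of them share a vertex. If none is used, we get $\M(G'\text{ minus new edges})$. If $q_iq_j$ is used, we get $\frac13\,\M(G'\setminus\{q_i,q_j\})$, with the new edges again irrelevant once $q_i,q_j$ are gone. Comparing with the decomposition of $\M(G)$, which equals $3\,\M(G'\text{ minus new edges})+\sum_{\{i,j\}}\M(G'\setminus\{q_i,q_j\})$, gives exactly $\M(G)=3\M(G')$.

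The only real obstacle is bookkeeping. I must make sure the correspondence between terms is exact: the remaining outside graphs must coincide in the two settings, and no matching of $G'$ can use two new edges. I also need to check there is no hidden case in which a corner is matched outward to a vertex other than its designated $q_i$. The hypothesis that the inner vertices have no neighbors beyond those indicated handles the latter, and the shared-vertex observation handles the former.
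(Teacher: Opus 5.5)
Your proof is correct and is essentially the paper's argument. The paper partitions matchings by which outer vertices are matched outside the configuration (all three, or exactly one). This is the same partition as yours by corners matched outward (none, or two), and the paper then compares local weights $3$ versus $1$ and $1$ versus $\frac13$ case by case, just as your two expansions of $\M(G)$ and $\M(G')$ do.
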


	\begin{figure}[ht]
		\includegraphics[scale=0.4]{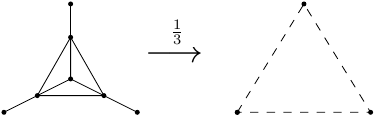}
		\caption{Local replacement for triangles with midpoints. All vertices in the triangle on the left have no other neighbors besides the ones shown. Dashed edges have weight $\frac 1 3$.}\label{trianglesplit}
	\end{figure}

\begin{proof}
	First, we observe that both before and after the local replacement, an odd number of vertices must be matched outside of the local configuration. 
	
	If all three of the outer vertices are matched outside, as in the upper case of Figure \ref{equilspider}, there are three ways to match the four vertices of the triangle with midpoint in the original configuration, each of which corresponds to the same matching in the replacement configuration. If only one outer vertex is matched outside (the lower case of Figure \ref{equilspider}), we must use the edges between the other two outer vertices and the corresponding exterior vertices of the triangle with midpoint, meaning that the midpoint of the triangle must be matched to the remaining exterior vertex. Thus, there is only one matching of the original configuration. For the replacement configuration, there is also only one matching, but this time, we must use one edge of the triangle to match the two vertices that are not matched to the outside, so our matching has weight $\frac 1 3$.
	
	Thus, in both cases, any matching of the rest of the graphs $G$ and $G'$ (which are the same outside of the local configuration) contributes $\frac 1 3$ as much to the total weight of matchings of $G'$ as of $G$.
	\end{proof}
	
	\begin{figure}[ht]
		\includegraphics{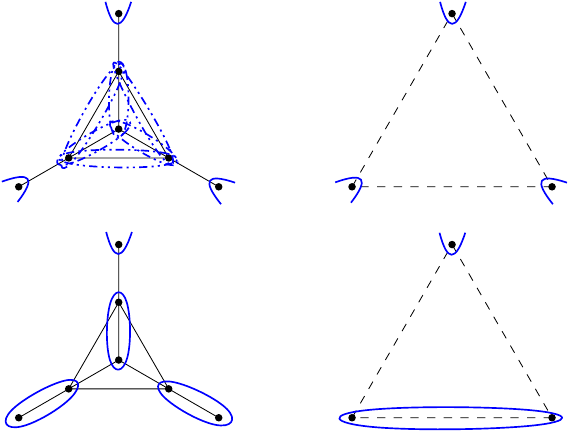}
		\caption{The correspondence between matchings before and after local replacement; dot-dashed circles represent possible matchings, while an arc around a vertex means it is matched outside the depicted region. The bottom pair may also be rotated by $120^\circ$ or $240^\circ$ to obtain an analogous correspondence.}\label{equilspider}
	\end{figure}
	
\begin{remark}
	The preceding result is also a corollary of a result that we will prove later, Lemma \ref{tetraspider}; in particular, Lemma \ref{tetraspider} simplifies to the preceding result when one outer vertex has exactly one leaf as a neighbor.
\end{remark}

Now we are ready to prove the main theorem of this section, a strengthening of the statement in Propp's Problem 31.

\begin{theorem}\label{equilthree}
	Let $E_n$ be the equilateral triangle graph with included midpoints shown in Figure \ref{equil}, where $n$ is the number of vertices on the bottom row. Then $\M(E_n)$ is divisible by $3^{n/2}$.
\end{theorem}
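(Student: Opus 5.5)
The natural strategy is to apply Lemma \ref{equillemma} simultaneously to a family of at least $n/2$ pairwise disjoint ``triangles with midpoint'' in $E_n$. Each such triangle must have all four of its vertices adjacent only to vertices of its own local configuration. After all replacements we get a weighted graph $G'$ with
\[
\M(E_n) = 3^{k}\,\M(G'),
\]
where $k$ is the number of replaced triangles. The work then splits into two parts: choosing the triangles, and showing that $3^k\M(G')$ is an integer divisible by $3^{n/2}$. The second part is subtle, because $G'$ carries edges of weight $\frac13$, so $\M(G')$ need not be an integer.

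First, I would fix the structure of $E_n$ from Figure \ref{equil}. It is a triangular array whose downward- or upward-pointing unit triangles contain a midpoint joined to their three corners. Only the ``inner'' triangles, those whose three corners each connect to exactly one outer vertex, fit the pattern of Lemma \ref{equillemma}. I would select the triangles row by row; for instance, use every other such cell in alternate rows, so that the chosen configurations are disjoint and their outer vertices are not corners of other chosen triangles. Then I would count them and aim for at least $\lceil n/2\rceil$.

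The key difficulty is that a single replacement contributes a factor $3$ but introduces weights $\frac13$. To get genuine divisibility, I would apply the lemma iteratively. After one replacement, I would try to show that the new $\frac13$-edges can be absorbed by a neighbouring configuration. Concretely, I would look for a second layer of structure where the three outer vertices, now forming a $\frac13$-weighted triangle, together with the adjacent unit cells again form a configuration whose matching count is manageable.

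Alternatively, and this is the route I would try first, I would avoid weights altogether. I would decompose the matchings of $E_n$ according to how each chosen triangle-with-midpoint is matched to the outside. By the case analysis in the proof of Lemma \ref{equillemma}, if all three corners of the inner triangle are matched internally (the ``outer vertices matched outside'' case), there are exactly $3$ completions. Otherwise there is exactly $1$ completion, and in that case two of the outer vertices are matched into the triangle. The goal is to pair up or group these ``otherwise'' cases so that their total contribution is also divisible by $3$. This is a mod-$3$ cancellation, analogous to the way Theorem \ref{three} used the subgraph $V_{n-2}$.

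The obstacle I expect is exactly this: guaranteeing a full factor $3$ per chosen cell rather than just $3$ overall. If the grouping fails, I would fall back on the row-by-row approach of Theorem \ref{three}. I would peel off strips of two rows of $E_n$, establish an exact formula of the form $3^{a}\cdot(\text{integer})$ for each strip with an arbitrary set of bottom-row vertices deleted, and then multiply across roughly $n/2$ strips. Each strip would contribute a factor $3$, proved by induction using Lemma \ref{equillemma} at the strip's boundary cell.
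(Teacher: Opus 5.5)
Your proposal does not contain an argument that produces the factor $3^{n/2}$. Moreover, the first route fails at the start.

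Lemma \ref{equillemma} requires each corner of the triangle with midpoint to have exactly one neighbour outside the cell. In $E_n$ no cell has this property. A corner of a cell is either a corner of the big triangle, with no outside neighbours, or it is shared with one or two other cells, giving three or six outside neighbours. Choosing cells ``every other one in alternate rows'' does not change this. The paper first performs vertex splitting at every exterior vertex, so that each corner of each cell has a single outside neighbour: the middle vertex of a path of length $2$ to a (green) star centre.

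More importantly, the accounting ``one factor of $3$ per chosen cell'' cannot work. After replacing $k$ cells, a matching of $G'$ may use a $\frac13$-edge from each replaced cell. So a priori $\M(G')\in 3^{-k}\mathbb{Z}$, and $\M(E_n)=3^k\M(G')$ gives no divisibility at all. You identify this obstacle yourself, but you give no mechanism for the proposed mod-$3$ grouping or for the two-row strip induction.

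The missing idea is a global count over \emph{all} cells. The paper applies Lemma \ref{equillemma} to every one of the $\binom n2$ cells, giving a factor $3^{\binom n2}$. It then reweights the weight-$1$ edges at the $\binom{n+1}2$ green vertices to $\frac13$. Each green vertex uses exactly one such edge, giving a factor $3^{\binom{n+1}2}$. Now every edge has weight $\frac13$, and every perfect matching has exactly $\frac32\binom n2+\frac12\binom{n+1}2$ edges. Hence
\[
\M(E_n)=3^{\binom n2+\binom{n+1}2-\frac32\binom n2-\frac12\binom{n+1}2}\,N=3^{n/2}N,\qquad N\in\mathbb{Z}.
\]

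Your second route can be repaired by the same count, with no weights at all:
\begin{itemize}
\item Every edge of $E_n$ lies in exactly one cell, and each midpoint must be matched inside its cell. So each cell contains one or two edges of a given perfect matching.
\item If $a$ cells contain two edges and $b$ cells contain one, then $a+b=\binom n2$ and $2a+b=\frac{n^2}{2}$, since $E_n$ has $n^2$ vertices. Hence $a=\frac n2$ in \emph{every} perfect matching.
\item Re-matching these $a$ fully internal cells independently, three ways each, partitions the matchings into classes of size exactly $3^{n/2}$.
\end{itemize}
So the factor does not come from particular chosen cells. It comes from the fact that the number of ``three-completion'' cells is the same for all matchings.
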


\begin{remark}
	Unlike in the previous section, we know from computations that $3^{n/2}$ is not always the exact power of 3 dividing $\M(E_n)$, so we cannot formulate a conjecture similar to the unsolved portion of Problem 30. Nevertheless, we have found many cases where $3^{n/2}$ is the exact power dividing $\M(E_n)$, including as large as $n=30$, so we think it likely that the above is the strongest general statement about the power of 3 that can be proven.
\end{remark}

\begin{proof}
	When $n$ is odd, $E_n$ has an odd number of vertices, implying $\M(E_n)=0$, at which point the theorem holds trivially. In what follows we assume, often tacitly, that $n$ is even.
	
	Call the vertices in the center of a triangle with midpoint of $E_n$ interior vertices, and the others exterior vertices. Perform vertex splitting (as in e.g. Lemma 1.3 of \cite{fact}) around each exterior vertex $v$ once for each triangle with midpoint incident to $v$, so that every exterior vertex is replaced by a star joined to $\deg(v)/3$ triangles by paths of length 2, as shown in Figure \ref{equilsplit}. This results in a new graph $E_n'$ with $\M(E_n')=\M(E_n)$.
		
		\begin{figure}[ht]
			\includegraphics[scale=0.75]{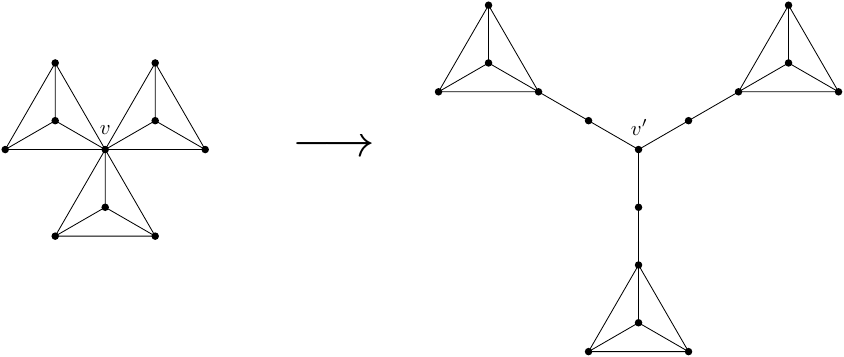}
			\caption{The vertex splitting process used to obtain $E_n'$.}\label{equilsplit}
		\end{figure}
		
Now, the exterior vertices of each triangle of $E_n'$ have only one other neighbor besides those in the triangle. Thus, around each triangle we have the local configuration on the left of Figure \ref{trianglesplit}, so we may apply Lemma \ref{equillemma} successively around each of the $\binom{n}{2}$ triangles with midpoint of $E_n'$. This yields a new graph $E_n''$ satisfying
\[ \M(E_n'') = \left( \frac 1 3 \right)^{\binom{n}{2}} \M(E_n).\]
The edges of $E_n''$ arising from the local replacement procedure have weight $\frac 1 3$. The others, which all have weight 1, are exactly those which connect a vertex that is involved in the local replacement procedure to one that is not. But any vertex not involved in the local replacement (shown as green vertices in Figure \ref{equil2}) was at the center of a vertex splitting procedure, so there are as many such vertices as exterior vertices of $E_n$, that is, $\binom{n+1}2$. Since exactly one edge of the second class must be used for each of these vertices, we may replace all these edges by edges of weight $\frac 1 3$ to obtain a new graph $E_n'''$ with
\[ \M(E_n''') = \left( \frac 1 3 \right)^{\binom{n+1} 2} \M(E_n'') = \left( \frac 1 3 \right)^{\binom{n+1}{2} +  \binom{n}{2}} \M(E_n).\]

		\begin{figure}
			\includegraphics[scale=0.8]{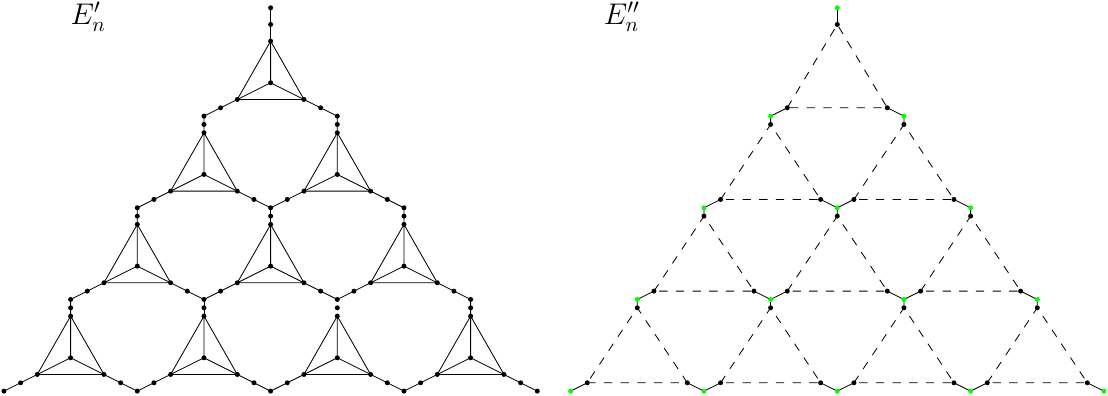}
			\caption{$E_n'$, obtained from $E_n$ by vertex splitting, and $E_n''$, obtained from $E_n'$ by local replacement. Dashed edges have weight $\frac 1 3$; note that edges of weight 1 are incident to exactly one green vertex.}\label{equil2}
		\end{figure}
		
Now, we see that $E_n'''$ has $3\binom{n}{2}+\binom{n+1}{2}$ vertices—three vertices for every triangle of $E_n$, and one for every vertex of $E_n$—so every matching of $E_n'''$ has $\frac 3 2\binom{n}{2}+ \frac 1 2 \binom{n+1}2 $ edges. Thus, changing all edges of $E_n$ to have weight 1 increases the number of matchings by $3^{\frac 3 2\binom{n}2+ \frac 1 2 \binom{n+1}2}$. It follows that
\[ \M(E_n''',\text{all weights 1}) = 3^{\frac 3 2\binom{n}2+ \frac 1 2 \binom{n+1}2}\M(E_n''') = 3^{\frac 3 2\binom{n}{2}+ \frac 1 2 \binom{n+1}2  - \binom{n}2-\binom{n+1}2}\M(E_n) \]
so, rearranging,
\[ \M(E_n) = 3^{\frac 1 2 \left(\binom{n+1}2-\binom{n}2\right)} \M(E_n''', \text{all weights }1)\]
and since the number of matchings of any graph with all edge weights 1 is an integer, and $\binom{n+1}2 - \binom n 2 = n$, it follows that
\[3^{n/2} \mid \M(E_n). \]
\end{proof}

\section{A generalization and a non-planar divisibility result}
We will now extend the results of the previous section to a more general case, which in particular yields divisibility results for subgraphs of a particular three-dimensional lattice.

As in the previous section, we begin with a result similar to the urban renewal or spider lemma (section 5 of \cite{gendom}); in this case, our result is simply a generalization of the existing spider lemma.

The result we present here was discovered independently by Ciucu in the mid-2010s {(private communication)}.

	\begin{figure}[ht]
		\includegraphics[scale=1]{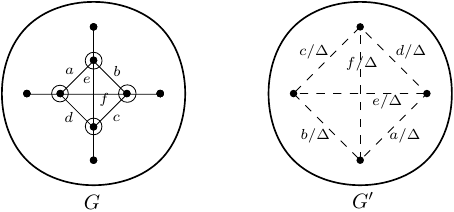}
		\caption{The circled vertices have no other neighbors besides the ones shown.} \label{spider1}
	\end{figure}
	
\begin{lemma}[Tetrahedral Spider Lemma]\label{tetraspider} Let $G$ be a graph which contains the local configuration shown on the left of Figure \ref{spider1},
	that is, which has a subgraph $K$ isomorphic to a weighted $K_4$ such that each vertex of $K$ is connected by an edge of weight 1 to a unique single additional neighbor. Let $G'$ be the graph obtained from $G$ by deleting the vertices of $K$ and including edges between its neighbors as shown on the right of Figure \ref{spider1}. Then
	\[ \M(G) = \Delta \cdot \M(G')\]
	where $\Delta = ac + bd + ef$ is the sum of the products of weights of opposite sides of $K$.
\end{lemma}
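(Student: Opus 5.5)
The plan is a direct bijective weight comparison, in the spirit of the proof of Lemma \ref{equillemma}. First I fix notation consistent with Figure \ref{spider1}. Let $k_1,\dots,k_4$ be the vertices of $K$, and let $w_i$ be the unique outside neighbour of $k_i$, joined to it by an edge of weight $1$. The six edges of $K$ have weights $a,b,c,d,e,f$, with $\{a,c\}$, $\{b,d\}$, $\{e,f\}$ the three pairs of opposite edges. I read the right side of Figure \ref{spider1} as follows: $G'$ has a $K_4$ on $w_1,\dots,w_4$ in which the edge $w_iw_j$ gets weight $\omega(k_kk_l)/\Delta$, where $\{k,l\}$ is the complementary pair. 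In words, each new edge carries the weight of the opposite edge of $K$, divided by $\Delta$. This agrees with Lemma \ref{equillemma}: there all weights are $1$ and $\Delta=3$, and the remark there says the earlier lemma is a special case.

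Next I partition the matchings of $G$ according to the set $U\subseteq\{1,2,3,4\}$ of indices $i$ for which the pendant edge $k_iw_i$ is used. The vertices $k_j$ with $j\notin U$ must be matched inside $K$, so $|U|$ is even, and $|U|\in\{0,2,4\}$. Fixing $U$ and fixing the matching $\nu$ of $G\setminus(K\cup\{w_i:i\in U\})$, I compare the total weight of matchings of $G$ that extend $\nu$ with the total weight of matchings of $G'$ that extend $\nu$. In $G'$, the vertices $w_i$ with $i\in U$ are exactly those left unmatched by $\nu$, so they must be matched among themselves using the new edges. The claim to verify is that in every case the $G$-weight is $\Delta$ times the $G'$-weight.

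\emph{Case $|U|=0$.} In $G$, $K$ is perfectly matched internally, and the three perfect matchings of $K$ contribute $ac+bd+ef=\Delta$. In $G'$ nothing remains to be matched, which contributes $1$.

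\emph{Case $|U|=2$, say $U=\{i,j\}$.} In $G$ the two remaining vertices $k_k,k_l$ are forced to use the edge between them. In $G'$ the vertices $w_i,w_j$ are forced to use $w_iw_j$, whose weight is $\omega(k_kk_l)/\Delta$. The ratio is $\Delta$.

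\emph{Case $|U|=4$.} In $G$ the four pendant edges contribute weight $1$. In $G'$ the vertices $w_1,\dots,w_4$ must be perfectly matched within the new $K_4$. Since opposite edges go to opposite edges, the total weight is $(ca+db+fe)/\Delta^2=1/\Delta$. The ratio is again $\Delta$.

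Summing over $U$ and $\nu$ gives $\M(G)=\Delta\,\M(G')$. I expect no real obstacle. The only point needing care is pinning down the weight convention for the right side of Figure \ref{spider1}: it must use the opposite-edge weights divided by $\Delta$, and the $|U|=2$ and $|U|=4$ cases are exactly what force this choice. I would state that convention explicitly at the start of the proof.
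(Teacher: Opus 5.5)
Your proposal is correct and is essentially the paper's proof. The paper also partitions the matchings by which of the four external neighbours are matched into $K$. It checks the same three cases (none, two, or all four) to get the ratio $\Delta$ each time, using the same convention that each new edge carries the weight of the opposite edge of $K$ divided by $\Delta$, which you state explicitly rather than leaving to Figure \ref{spider1}.
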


\begin{proof}
	Denote the set of external neighbors of vertices of $K$ by $N$. We will partition the matchings of $G$ by which of the vertices of $N$ are matched to the outside of the local configuration shown in Figure \ref{spider1}. Each of the sets of matchings in the partition is readily identified with the set of matchings of $G'$ with the corresponding vertices matched to the outside of the local configuration of $G'$. Up to rotation, the four such identifications are shown in Figure \ref{spider2}. We now calculate explicitly that for each identification, the total weight of the matchings of the replacement is $\frac 1 \Delta$ that of the original configuration.
	
	When all four vertices of $N$ are matched inward, the edges of the original configuration contribute weight 1 to each matching. On the other hand, there are three ways to match the replacement configuration (as shown in the top right of Figure \ref{spider2}), which contribute weight $\frac {ac}{\Delta^2}, \frac{bd}{\Delta^2}$, and $\frac{ef}{\Delta^2}$, respectively, to any matching of the rest of the graph. The total contribution is then $\frac{ac+bd+ef}{\Delta^2} = \frac 1 \Delta$.

		\begin{figure}
			\includegraphics[scale=1.2]{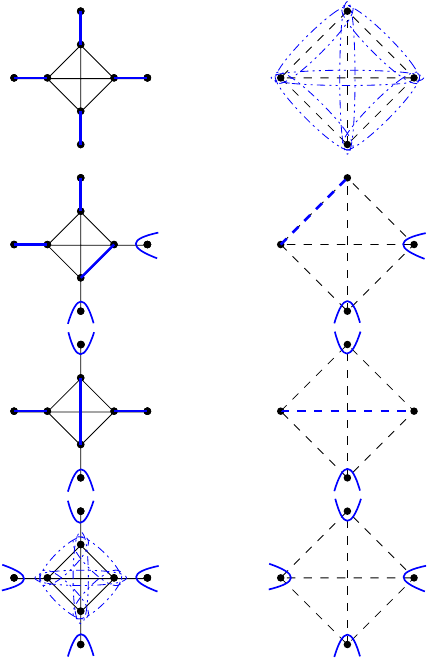}
			\caption{Four types of matching bijections under the local replacement. Dot-dashed circles represent possible matchings, whereas an arc around a vertex means it is matched outside the region.}\label{spider2}
		\end{figure}
		
When two vertices of $N$ are matched inwards, as in the second and third rows of Figure \ref{spider2}, there is one way to match the original configuration, which uses the edge of the $K_4$ connecting the two vertices of $K$ whose neighbors in $N$ are matched outwards. Likewise, there is one way to match the replacement configuration, which uses the edge of this $K_4$ whose vertices are not matched outwards. One may readily check that edges that are, in this sense, opposite between the original and replacement configuration have the same weight, up to modification by $\frac 1 \Delta$, so again the ratio of weights is $\frac 1 \Delta$ in all cases.

Finally, when no vertices of $N$ are matched inwards, the original configuration may be matched in three ways, with weights $ac, bd$, and $ef$ respectively, for a total weight of $\Delta$. On the other hand, there is only one way to match the replacement configuration, which uses only edges of weight 1; so the ratio of weights is again $\frac 1 \Delta$.
\end{proof}

In the spirit of the notion of a cellular graph as defined in \cite{cellular}, we present the following definition.

\begin{definition}
Define a \textit{tetracellular graph} to be a graph $G$ which can be covered by subgraphs (called cells), each isomorphic to $K_4$, such that every edge is contained in exactly one cell.
\end{definition}

\begin{theorem}\label{tetra}
Let $G$ be a tetracellular graph with $V$ vertices and $C$ cells. If $V > 2C$, then
\[ 3^{\frac{(V-2C)}2} \text{ divides } \M(G).\]
\end{theorem}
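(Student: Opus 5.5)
We follow the proof of Theorem \ref{equilthree}, now using the Tetrahedral Spider Lemma (Lemma \ref{tetraspider}) with all weights of $K$ equal to $1$, so that $\Delta = 3$.

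\textbf{Step 1: vertex splitting.} For each vertex $v$ of $G$, let $d(v)$ be the number of cells containing $v$. Split $v$ into a central vertex $v^*$ joined by paths of length $2$ to $d(v)$ new vertices, one in each cell containing $v$. This is the same vertex splitting used for $E_n'$. The resulting graph $G'$ satisfies $\M(G')=\M(G)$. In $G'$ the cells are vertex-disjoint copies of $K_4$. Each cell vertex has exactly one outside neighbor, namely the middle vertex of its path, and that edge has weight $1$. So every cell is exactly the configuration on the left of Figure \ref{spider1}.

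\textbf{Step 2: spider replacements.} Apply Lemma \ref{tetraspider} successively to all $C$ cells; each application divides the matching generating function by $\Delta=3$. Since the lemma deletes the cell's vertices and adds edges only among its neighbors, the remaining cells keep the required local form and all $C$ replacements can be carried out.

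The result is a weighted graph $G''$ with $\M(G) = 3^C\M(G'')$. In $G''$, each replacement contributes a $K_4$ on the four former path-middle vertices, with edge weights $\frac{1}{\Delta}=\frac13$. I need to verify against Figure \ref{spider2} that every new edge is indeed $\frac1\Delta$ times a weight-$1$ original, including the ``outer square'' edges.

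The remaining edges have weight $1$ and join a middle vertex to some $v^*$. Every $v^*$ has all its incident edges of this type, and every matching uses exactly one edge at each of the $V$ vertices $v^*$. Rescaling those edges by $\frac13$ therefore gives $G'''$ with $\M(G''')=3^{-V}\M(G'')$, and now every edge of $G'''$ has weight $\frac13$.

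\textbf{Step 3: counting.} $G'''$ has $4C+V$ vertices: four middle vertices per cell and one $v^*$ per original vertex. Each matching therefore has $(4C+V)/2$ edges, and $\M(G''')=3^{-(4C+V)/2}N$, where $N$ is the number of perfect matchings of the unweighted $G'''$, an integer. Combining,
\[\M(G)=3^{C}\cdot 3^{V}\cdot 3^{-(4C+V)/2}N=3^{(V-2C)/2}N.\]
This gives the claim when $V>2C$.

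If $V$ is odd, then $\M(G)=0$ and the statement is trivial. If $V$ is even but $V-2C$ is odd, the half-integer exponent still works out, because $N=0$ whenever $4C+V$ is odd.

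\textbf{Main obstacle.} The step I expect to be hardest is the bookkeeping in Step 2. I must confirm that after the spider replacement every edge carries exactly weight $\frac13$, so that the uniform rescaling in Step 3 is legitimate, and that the successive applications of the lemma do not interfere with one another. The vertex splitting in Step 1 is what guarantees the non-interference, since it makes the cells vertex-disjoint with private outside neighbors.
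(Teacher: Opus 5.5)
Your proposal is correct and follows the paper's proof essentially step for step. It uses vertex splitting to make the cells vertex-disjoint with private weight-$1$ neighbors, then $C$ applications of the tetrahedral spider lemma with $\Delta=3$, then rescaling of the edges at the $V$ central vertices by $\frac13$, and finally a count of the $(4C+V)/2$ edges of weight $\frac13$ in each matching of $G'''$, giving $\M(G)=3^{(V-2C)/2}N$.

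The weight check you flagged goes through. The two-inward case in the proof of Lemma \ref{tetraspider} shows that each replacement edge carries the weight of the opposite cell edge divided by $\Delta$, so all six new edges have weight $\frac13$ here. Also, your case ``$V$ even but $V-2C$ odd'' cannot occur, since $V-2C\equiv V \pmod 2$. The only parity case needing separate mention is $V$ odd, where $\M(G)=0$.
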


\begin{proof}
The result will follow from the following construction. We begin with a vertex splitting procedure similar to the splitting process with which we began Theorem \ref{equilthree}.

First, at every vertex $v$ of $G$ we perform vertex splitting (as before; again cf. Lemma 1.3 of \cite{fact}) once for each cell that contains $v$, separating the vertices in that cell from the other neighbors of $v$. This action on a single vertex is shown in Figure \ref{tetra1}; in words, whenever $n$ cells met at a vertex of $G$, the graph resulting from vertex splitting has $n$ paths of two edges all joined at a single point, with a single cell joined to their other ends. Applying this process to each vertex of $G$ produces a new graph $G'$ with $\M(G') = \M(G)$.
		\begin{figure}[ht]
			\includegraphics[scale=0.8]{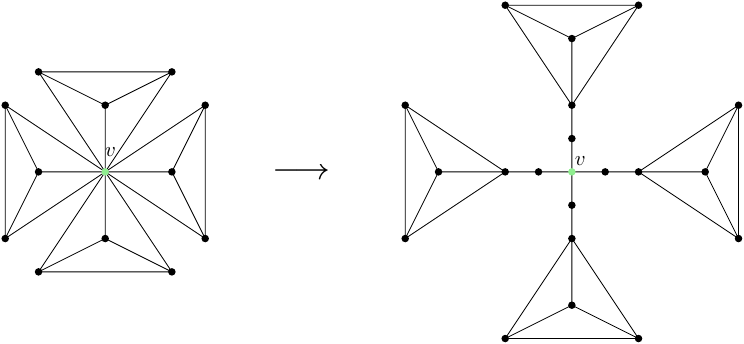}
			\caption{The vertex splitting process used in proving Theorem \ref{tetra}.}\label{tetra1}
		\end{figure}
		
The cells of $G$ are exactly the cells of $G'$, but each vertex of each cell of $G'$ was involved in a splitting process as described above, so by construction has only one neighbor outside of the cell. Thus, we can apply Lemma \ref{tetraspider} around each cell of $G'$, creating a new graph $G''$ (an example is shown in Figure \ref{tetra2}). 
Since each edge of $G'$ has weight 1, $\Delta=3$ for all cells, so
\[ \M(G) = \M(G') = 3^C \M(G'').\]
		
		\begin{figure}
			\includegraphics[scale=1]{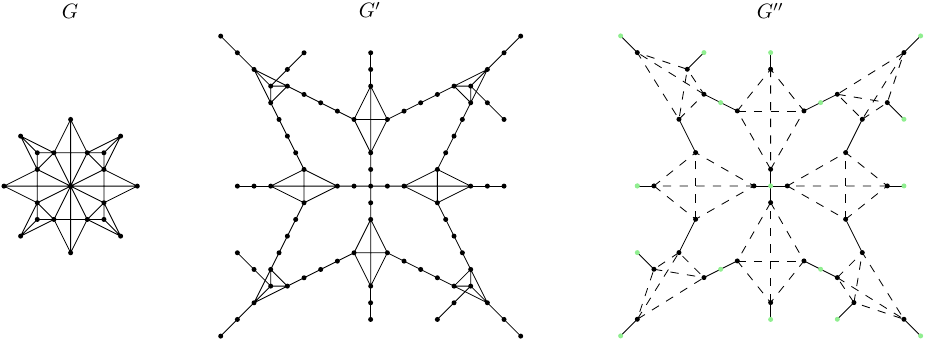}
			\caption{An example application of the process of Theorem \ref{tetra}; dashed edges have weight $\frac 1 3$. Each green vertex of $G''$ is incident only to edges of weight 1. $G'''$ is obtained by re-weighting these edges.} \label{tetra2}
		\end{figure}
		
Now, since every edge of $G$ is contained in a cell, every edge of $G'$ is either contained in a cell of $G'$ or is generated by the splitting process. Those generated by the splitting process are further partitioned into two types: those which are incident to a vertex at the center of a splitting process, colored green in Figures \ref{tetra1} and \ref{tetra2}, and those which are not. In $G''$, exactly those edges incident to a green vertex remain; the rest are removed in our applications of the tetrahedral spider lemma and replaced by edges of weight $\frac 1 3$. Let $G'''$ be the graph obtained by re-weighting all edges of $G''$ incident to a green vertex with weight $\frac 1 3$, so that all edges of $G'''$ have weight $\frac 1 3$. Since exactly one edge incident to each green vertex is used in any perfect matching of $G''$,
\[ \M(G) = 3^C \M(G'') = 3^{C+V} \M(G''').\]
As $G'''$ has $4C+V$ vertices, any matching of $G'''$ uses $2C+\frac{V}2$ edges, each of weight $\frac 1 3$. Thus $3^{2C+\frac{V}2}\M(G''')$ is an integer, so since
\[ \M(G) = 3^{C+V} \M(G''') = 3^{-C + \frac V 2} \left( 3^{2C + \frac V 2} \M(G''') \right),\]
$3^{\frac{V}2-C}$ divides $\M(G)$.
\end{proof}

\begin{remark}
If instead $G$ is a cellular graph (exactly as defined in \cite{cellular}) one may use the constructions of the proof of Theorem \ref{tetra}, combined with the traditional spider lemma, to show that $2^{\frac{V-2C}2}$ divides $\M(G)$. %In particular, this yields that the number of matchings of the Aztec diamond of order $n$, $AD_n$, is divisible by $2^n$. While this is a weaker version of the longstanding result that $\M(AD_n)=2^{\frac{n(n+1)}{2}}$, it may provide an illustration of the relative strength of the divisibility results that Theorem \ref{tetra} achieves.
\end{remark}

The immediate usefulness of Theorem \ref{tetra} may not be clear; after all, with the exception of the previously examined case of Problem 31, we know of no tetracellular graphs appearing in the literature on matchings.

However, we will show in the following construction that Lemma \ref{tetra} applies to certain non-planar tetracellular graphs. Even though matchings have been enumerated for certain non-planar graphs previously (e.g., in \cite{fisher}), results in this case\footnote{More precisely, the case of graphs which cannot be embedded in a 2-manifold.} have been much more difficult to obtain, so that we believe even a general divisibility result represents significant progress.

Let $L$ be the infinite lattice of regular tetrahedrons such that:
\begin{enumerate}[(a)]
\item Exactly two tetrahedrons meet at each vertex; and
\item At any such vertex, each incident edge from one tetrahedron is collinear with an incident edge from the other tetrahedron.
\end{enumerate}
Condition (b) may also be stated as: whenever two tetrahedrons meet at a vertex, their faces opposing that vertex are parallel to each other. A two-dimensional ``slice'' of $L$ is shown in Figure \ref{graphite}, and a three-dimensional view of $L$ is shown in Figure \ref{lattice}.
	
	\begin{figure}[ht]
		\includegraphics[scale=0.7]{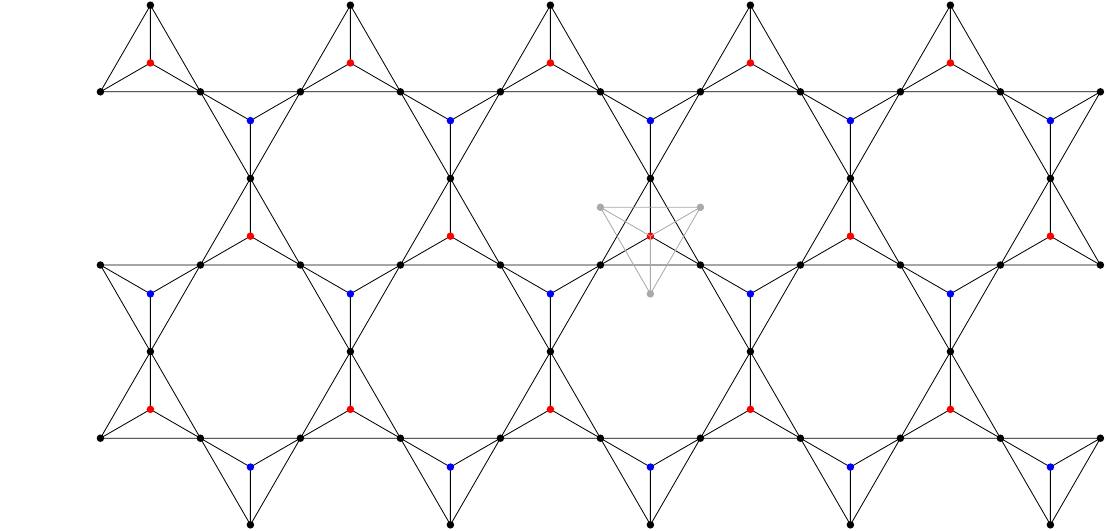}
		\caption{One layer of the lattice $L$; blue vertices lie above the plane, while red ones lie below it. Adjoining tetrahedrons on the layers above and below have a $60^\circ$ rotational offset, as exemplified by the gray tetrahedron in the center. }\label{graphite}
	\end{figure}
	
	\begin{cor}
		Let $G$ be any finite subgraph of $L$ containing only whole tetrahedrons. Then $\M(G)$ is divisible by $3$.
	\end{cor}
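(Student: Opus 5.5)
We apply Theorem \ref{tetra} to $G$, so it suffices to check two things: that $G$ is tetracellular, and that $V > 2C$, where $V$ and $C$ are the numbers of vertices and cells of $G$. It is convenient to handle $\M(G)=0$ separately, since then divisibility is trivial. So we may assume $G$ has a perfect matching; in particular $V$ is even, and $\frac{V-2C}{2}$ is an integer whenever $V-2C$ is even.

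First, tetracellularity. $G$ consists of whole tetrahedrons of $L$, so we take the cells to be these tetrahedrons, each of whose edge graphs is a $K_4$. Every edge of $G$ is an edge of $L$. We must show that no edge of $L$ lies in two tetrahedrons. By condition (a), two tetrahedrons meet at each vertex. By condition (b), the edges of the two tetrahedrons at a shared vertex are pairwise collinear but point in opposite directions, since the opposite faces are parallel and lie on opposite sides of the vertex. Hence two distinct tetrahedrons share at most a vertex and never an edge. Also, $L$ has no edges other than tetrahedron edges. So each edge of $G$ lies in exactly one cell.

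Next, the counting, which is the key step. Each cell has $4$ vertices, and by (a) each vertex of $L$ lies in at most two cells of $G$. Counting vertex--cell incidences gives $4C \le 2V$, i.e.\ $V \ge 2C$. This is only the weak inequality, while Theorem \ref{tetra} as stated requires the strict one. Equality $V=2C$ would force every vertex of $G$ to lie in exactly two tetrahedrons of $G$. Then $G$ would be closed under passing to the adjacent tetrahedron through any of its vertices. By (b), the tetrahedrons adjacent through a vertex are translated copies, so such a closed set of tetrahedrons is infinite; concretely, following a fixed direction through a chain of point-reflected tetrahedrons never terminates. This contradicts finiteness, so $V>2C$.

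Finally, Theorem \ref{tetra} gives $3^{(V-2C)/2}\mid \M(G)$. For the conclusion to follow we need $(V-2C)/2 \ge 1$. We have $V-2C>0$, and $V-2C$ is even because $V$ is even (assuming $\M(G)\neq 0$). So $V-2C \ge 2$, and $3$ divides $\M(G)$. If instead $V$ is odd, then $\M(G)=0$ and the statement holds trivially.

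The main obstacle is the strictness $V>2C$, i.e.\ ruling out a finite subgraph in which every vertex is shared by two tetrahedrons. I would make this rigorous with an extremal argument. Choose a linear functional $h$ generic on the vertex positions, and take a vertex $v$ of $G$ maximizing $h$. The two tetrahedrons at $v$ lie in opposite cones at $v$: their edge directions at $v$ are negatives of each other. Since the four edge directions at $v$ do not lie in a half-space, one of those two tetrahedrons has a vertex with larger $h$. That tetrahedron therefore cannot be in $G$. So $v$ lies in only one cell of $G$, giving $4C \le 2V-1$ and hence $V>2C$.
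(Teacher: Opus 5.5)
Your proposal is correct and follows the paper's proof: tetracellularity, the double count $4C\le 2V$ from condition (a), strictness because a finite $G$ has a vertex lying in fewer than two of its cells, and then Theorem \ref{tetra}. Your linear-functional extremal argument rigorously justifies the step the paper only asserts (``as $G$ is finite''), and your parity remark correctly secures $(V-2C)/2\ge 1$, which the paper passes over (its proof writes $3^{V-2C}$).

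There are two harmless slips. If $e_1,e_2,e_3$ are the edge vectors at $v$ of one tetrahedron, the edge directions at $v$ are the six vectors $\pm e_1,\pm e_2,\pm e_3$, not four; any nonzero $h$ is positive on one of them, so the argument stands. Also, tetrahedra adjacent through a vertex are point reflections of each other through that vertex, not translates.
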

	
	\begin{figure}
		\includegraphics[scale=0.9]{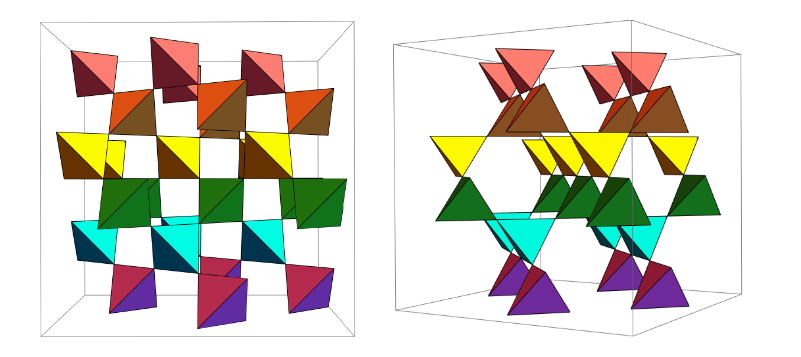}
		\caption{Two views of the lattice $L$, restricted to the cube $[-3,3]^3$.}\label{lattice}
	\end{figure}
	
\begin{proof}
	Since $G$ contains only whole tetrahedrons, it is tetracellular, so we may apply Lemma \ref{tetra} as long $G$ has more than twice as many vertices as cells. But each cell of $G$ contains four vertices, whereas each vertex of $G$ is contained in at most two cells. Set $d_V$ to be the average number of cells containing any vertex. We may count the number of pairs consisting of a cell and a vertex contained in that cell in two ways. First, we may choose a cell and then choose one of the four vertices inside it, which we can do in $4C$ ways; alternatively, we may choose a vertex and pick one of the cells it is contained in, which we may do in $V \cdot d_V$ ways. Since some vertex of $G$ must not be in two cells (as $G$ is finite), $d_V < 2$, so
	\[ 4C = d_V \cdot V < 2V \]
	and thus $V > 2C$. Thus $\M(G)$ is divisible by $3^{V-2C}$, so in particular, by $3$.
\end{proof}

It is interesting that this proof is inherently dependent on the sparse nature of the lattice $L$. If we were to consider a denser tetracellular lattice, such as a triangular pyramid constructed out of regular tetrahedrons, most vertices would be contained in three or four cells, and we could not expect $V > 2C$ to hold. Further, the non-planar graphs enumerated in \cite{fisher} are similarly sparse to our lattice $L$.
It may therefore be worth investigating whether further results on sparse non-planar graphs are more attainable.

\medskip

We conclude this paper with a short list of possible directions for future investigation:
\begin{enumerate}
	\item Is there a subgraph of $L$ with a nice formula for its number of matchings?
	\item Is there a family of non-planar tetracellular graphs which would admit a recursion similar to the Reduction Theorem of \cite{ppowers}?
	\item Can we construct an analogue to the tetrahedral spider lemma for other regular polygons, or more generally find divisibility results for lattices constructed from other regular polygons?
\end{enumerate}
\medskip

\section*{Acknowledgements}
We wish to thank Mihai Ciucu for the many helpful conversations throughout the development of the results presented here, and for his valuable comments and suggestions during the process of reviewing and editing this paper.

\medskip

\bibliographystyle{plain}
\bibliography{articlebib}

\end{document}